\newcommand{\N}{\mathbb{N}}
\newcommand{\sub}{\subseteq}
\def\epsilon{\varepsilon}
\newtheorem{theo}{Theorem}
\newtheorem{lem}[theo]{Lemma}
\newtheorem{cor}[theo]{Corollary}
\newtheorem{rem}[theo]{Remark}
\newtheorem{exa}[theo]{Example}
\numberwithin{equation}{section}
\title{On control measures of multimeasures}
\author{Jos\'{e} Rodr\'{i}guez}
\address{Dpto. de Ingenier\'{i}a y Tecnolog\'{i}a de Computadores\\Facultad de Inform\'{a}tica\\
Universidad de Murcia\\ 30100 Espinardo (Murcia)\\ Spain} \email{joserr@um.es}
\subjclass[2020]{28B20, 46G10}
\keywords{Multimeasure; control measure; countable chain condition}
\thanks{Research partially supported by {\em Agencia Estatal de Investigaci\'{o}n} [MTM2017-86182-P, grant cofunded by ERDF, EU] 
and {\em Fundaci\'on S\'eneca} [20797/PI/18]}
\begin{document}

\begin{abstract}
Let $M$ be a multimeasure defined on a $\sigma$-algebra and taking values in the family of bounded non-empty subsets
of a Banach space~$X$. We prove that $M$ admits a control measure whenever $X$ contains no subspace isomorphic to~$c_0(\omega_1)$.
The additional assumption on~$X$ is shown to be essential.
\end{abstract}

\maketitle

\section{Introduction}

Throughout this paper $(\Omega,\Sigma)$ is a measurable space and $X$ is a real Banach space. 
The dual of~$X$ is denoted by~$X^*$ and $B_{X^*}$ is the closed unit ball of~$X^*$.
By a subspace of a Banach space we mean a norm-closed linear subspace.
Given any $A\in \Sigma$, we write $\Sigma_A:=\{B \in \Sigma: B \sub A\}$. 
The set of all countably additive real-valued measures on~$\Sigma$
is denoted by ${\rm ca}(\Sigma)$. The subset of ${\rm ca}(\Sigma)$ consisting of all finite non-negative
measures on~$\Sigma$ is denoted by ${\rm ca}^+(\Sigma)$. Given any $\nu\in {\rm ca}(\Sigma)$, its variation
is denoted by~$|\nu|$. We write $\mathcal{N}(\mu):=\{A\in \Sigma:\mu(A)=0\}$ for every $\mu \in {\rm ca}^+(\Sigma)$. 
The first uncountable ordinal is denoted by~$\omega_1$.

A classical result of Bartle, Dunford and Schwartz~\cite{bar-alt} (cf. \cite[p.~14, Corollary~6]{die-uhl-J}) states that any
countably additive vector measure $m:\Sigma\to X$ admits a control measure, i.e., 
there is $\mu\in {\rm ca}^+(\Sigma)$ such that $m(A)=0$ whenever
$A\in \mathcal{N}(\mu)$. This result is of great importance in the theory of vector measures.
In this paper we discuss the existence of control measures of \emph{set-valued} measures. Among all different concepts of 
set-valued measure (see, e.g., \cite[Chapter~8]{hu-pap}), we work with the weakest one, namely, that of multimeasure.
 
To recall the definition of multimeasure we need to introduce some notation. We denote by $b(X)$ the family of all bounded non-empty subsets of~$X$.
Given $C \in b(X)$ and $x^*\in X^*$, we write 
$$
	s(x^*,C):=\sup\{x^*(x): \, x\in C\}.
$$
A map $M: \Sigma \to b(X)$
is said to be a {\em multimeasure} if, for each $x^*\in X^*$, the map $s(x^*,M): \Sigma\to \mathbb R$
defined by
$$ 
	s(x^*,M)(A):=s(x^*,M(A)) \quad\text{for all $A\in \Sigma$}
$$
belongs to~${\rm ca}(\Sigma)$. Given a map $M:\Sigma \to b(X)$ (non necessarily a multimeasure),
we say that $\mu\in {\rm ca}^+(\Sigma)$ is a {\em control measure} of~$M$ if $M(A)=\{0\}$ whenever $A\in \mathcal{N}(\mu)$ or, equivalently, 
$\mathcal{N}(\mu)$ is contained in
$$
	\mathcal{N}(M):=\{A\in \Sigma: \, M(B)=\{0\} \ \text{for all $B\in \Sigma_A$}\}.
$$

Only a few scattered results on control measures of set-valued measures can be found in the literature, some of them
in the more general setting of locally convex spaces, see \cite[Section~8]{dre} and \cite[Section~3]{dip-alt}.
We highlight that a multimeasure $M:\Sigma \to b(X)$ admits a control measure in each of the following cases:
\begin{enumerate}
\item[(i)] if $M(A)$ is relatively weakly compact for every $A\in \Sigma$;
\item[(ii)] if $X^*$ is weak$^*$-separable (see \cite[Theorem~3.6]{dip-alt}).
\end{enumerate}
On the one hand, case~(i) follows from the result of Bartle, Dunford and Schwartz and the fact that, in this case, the (single-valued) map 
$m: \Sigma \to \ell_\infty(B_{X^*})$ defined by
$$
	m(A)(x^*):=s(x^*,M(A))
	\quad
	\text{for all $A\in \Sigma$ and $x^*\in B_{X^*}$}
$$
is a countably additive vector measure (see, e.g., \cite[p.~852, Theorem~4.10]{hu-pap}; cf. \cite[Theorem~3.4]{cas-kad-rod-2}). On the other hand,
case~(ii) was obtained in~\cite{dip-alt} via a countable chain condition property which characterizes multimeasures admitting control measures 
(see \cite[Theorem~3.4]{dip-alt}). That characterization was inspired by some ideas of~\cite{dre4} and~\cite{mus14}. 
Theorem~\ref{theo:CCC} below is a slight variant of it and we include
a proof in Section~\ref{section:CCC} for completeness. 

A set $\mathcal{A} \sub \Sigma$ is said to have the {\em countable chain condition (CCC)} if 
there is no uncountable subset of~$\mathcal{A}$ consisting of pairwise disjoint sets. Clearly, $\Sigma\setminus \mathcal{N}(\mu)$ has the CCC
for any $\mu\in {\rm ca}^+(\Sigma)$. Therefore, if a map $M:\Sigma \to b(X)$
admits a control measure, then $\Sigma \setminus \mathcal{N}(M)$ has the CCC.

\begin{theo}\label{theo:CCC}
Let $M:\Sigma \to b(X)$ be a map for which there is a symmetric set $\Gamma \sub X^*$ separating the points of~$X$
such that $s(x^*,M) \in {\rm ca}(\Sigma)$ for every $x^*\in \Gamma$. 
Then $M$ admits a control measure if and only if $\Sigma \setminus \mathcal{N}(M)$ has the CCC.
\end{theo}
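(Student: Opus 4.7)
The forward direction is straightforward: if $\mu\in{\rm ca}^+(\Sigma)$ is a control measure, then $\mathcal{N}(\mu)\subseteq\mathcal{N}(M)$, so $\Sigma\setminus\mathcal{N}(M)\subseteq\Sigma\setminus\mathcal{N}(\mu)$, and any pairwise disjoint family of $\mu$-non-null sets is countable since $\mu(\Omega)<\infty$.

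For the reverse direction, the symmetry of $\Gamma$ and its separation of the points of $X$ yield $M(B)=\{0\}$ if and only if $s(x^*,M)(B)=0$ for every $x^*\in\Gamma$; consequently $\mathcal{N}(M)=\bigcap_{x^*\in\Gamma}\mathcal{N}(|s(x^*,M)|)$. It therefore suffices to find a countable $(x^*_n)\subseteq\Gamma$ with $\mathcal{N}(M)=\bigcap_n\mathcal{N}(|s(x^*_n,M)|)$, because then $\mu:=\sum_n c_n|s(x^*_n,M)|$ (with $c_n>0$ and $\sum_n c_n|s(x^*_n,M)|(\Omega)<\infty$) is a control measure. I would build $(x^*_n)$ by transfinite recursion: set $\mu_\alpha:=\sum_{\beta<\alpha}c_\beta|s(x^*_\beta,M)|$, and at stage $\alpha<\omega_1$ pick $A_\alpha\in\Sigma$ and $x^*_\alpha\in\Gamma$ with $\mu_\alpha(A_\alpha)=0$, $|s(x^*_\alpha,M)|(A_\alpha)>0$, and $A_\alpha\subseteq\Omega\setminus\bigcup_{\beta<\alpha}A_\beta$, halting whenever no such selection is possible. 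If the recursion proceeded through all of $\omega_1$, the $(A_\alpha)$ would form an uncountable pairwise disjoint family in $\Sigma\setminus\mathcal{N}(M)$, contradicting CCC; hence it halts at some countable $\alpha_0$.

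The main obstacle is that halting only guarantees $\mu_{\alpha_0}$ controls $M$ on $\Omega\setminus\bigcup_{\beta<\alpha_0}A_\beta$, not on all of $\Omega$. For $A\in\mathcal{N}(\mu_{\alpha_0})$, the part outside $\bigcup_{\beta<\alpha_0}A_\beta$ is $\mathcal{N}(M)$-null by halting, and each $A\cap A_\beta$ is $|s(x^*_\beta,M)|$-null; but to conclude $A\cap A_\beta\in\mathcal{N}(M)$ one needs $|s(x^*_\beta,M)|$ to genuinely control $M$ on $A_\beta$, not merely to charge it. Achieving this forces a refinement of the recursive step: at each stage one must additionally arrange that $|s(x^*_\alpha,M)|$ controls $M$ on $A_\alpha$, which itself requires an auxiliary internal exhaustion inside $A_\alpha$. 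Setting up this inner refinement compatibly with the outer pairwise disjointness condition is where I expect the technical heart of the proof to lie.
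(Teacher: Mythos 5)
Your setup is sound and follows the same overall strategy as the paper: the forward direction, the identity $\mathcal{N}(M)=\bigcap_{x^*\in\Gamma}\mathcal{N}(|s(x^*,M)|)$ (which uses symmetry and separation exactly as needed), and the reduction to extracting a countable subfamily of $\Gamma$ are all correct, and your transfinite recursion does halt at a countable stage by the CCC. However, the step you defer in your last paragraph is not a refinement to be filled in later --- it is the actual content of the theorem, and as written your argument does not prove it. Concretely, halting at $\alpha_0$ only tells you that every $\mu_{\alpha_0}$-null subset of $\Omega\setminus\bigcup_{\beta<\alpha_0}A_\beta$ belongs to $\mathcal{N}(M)$; on each piece $A_\beta$ you know only that $A\cap A_\beta$ is $|s(x^*_\beta,M)|$-null, and nothing forces a single functional's variation to detect all of $M$ on that piece. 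You have correctly diagnosed this, but diagnosing the gap is not the same as closing it.

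The paper closes it with a separate lemma (Lemma~\ref{lem:1}), proved by a second, ``inner'' CCC-exhaustion: writing $\mathcal{N}_{x^*}:=\mathcal{N}(|s(x^*,M)|)$, one shows that below any $A\notin\mathcal{N}_{x^*}$ there is $B\in\Sigma_A\setminus\mathcal{N}_{x^*}$ on which the two null ideals agree, i.e.\ $\mathcal{N}(M)\cap\Sigma_B=\mathcal{N}_{x^*}\cap\Sigma_B$. The point is that one takes a maximal pairwise disjoint family $\mathcal{C}$ of subsets of $A$ lying in $\mathcal{N}_{x^*}\setminus\mathcal{N}(M)$; by the CCC of $\Sigma\setminus\mathcal{N}(M)$ this family is countable, so its union $U$ lies in $\mathcal{N}_{x^*}$, whence $B:=A\setminus U\notin\mathcal{N}_{x^*}$, and any $C\in\Sigma_B\cap(\mathcal{N}_{x^*}\setminus\mathcal{N}(M))$ would enlarge $\mathcal{C}$, a contradiction. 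The outer argument (your recursion, or equivalently a maximal disjoint family as in the paper's Lemma~\ref{lem:2}) must then be run over such \emph{good} pieces $B$ rather than over arbitrary sets merely charged by some $|s(x^*_\alpha,M)|$; with that modification the decomposition of an arbitrary $A\in\bigcap_n\mathcal{N}_{x^*_n}$ into $A\cap\bigcup_nB_n$ and $A\setminus\bigcup_nB_n$ goes through, the first part because each $A\cap B_n\in\mathcal{N}_{x^*_n}\cap\Sigma_{B_n}=\mathcal{N}(M)\cap\Sigma_{B_n}$ and $\mathcal{N}(M)$ is closed under countable unions, the second by maximality combined with another application of Lemma~\ref{lem:1}. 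So the missing piece is a genuine second use of the CCC hypothesis, not bookkeeping, and without it the proposal is incomplete.
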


Theorem~\ref{theo:CCC} is one of the keys to obtain our main result: 

\begin{theo}\label{theo:Main}
Suppose $X$ contains no subspace isomorphic to~$c_0(\omega_1)$. Let $Z \sub X^*$ be a norming subspace.
Let $M:\Sigma \to b(X)$ be a map such that $s(x^*,M) \in {\rm ca}(\Sigma)$ for every $x^*\in Z$. 
Then $M$ admits a control measure.
\end{theo}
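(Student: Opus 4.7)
The plan is to apply Theorem~\ref{theo:CCC} and obtain a contradiction by producing an isomorphic copy of $c_0(\omega_1)$ inside $X$. First, since $Z$ is a linear subspace of $X^*$ it is symmetric ($Z=-Z$), and the norming property forces $Z$ to separate the points of $X$. Hence Theorem~\ref{theo:CCC} (with $\Gamma=Z$) reduces the problem to showing that $\Sigma\setminus\mathcal{N}(M)$ satisfies the countable chain condition.

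Suppose, for contradiction, that there is a pairwise disjoint family $(A_\alpha)_{\alpha<\omega_1}\sub\Sigma\setminus\mathcal{N}(M)$. For each $\alpha$ pick $B_\alpha\in\Sigma_{A_\alpha}$ and $y_\alpha\in M(B_\alpha)\setminus\{0\}$; a pigeonhole argument on~$\omega_1$ lets me assume $\|y_\alpha\|\geq\eta$ for some $\eta>0$. The key observation is that $(y_\alpha)$ is \emph{long weakly unconditionally Cauchy} relative to $Z$: for every $x^*\in Z$, both $s(x^*,M)$ and $s(-x^*,M)$ lie in ${\rm ca}(\Sigma)$ and hence have finite variation; combining the estimate $|x^*(y_\alpha)|\leq s(x^*,M)(B_\alpha)^+ + s(-x^*,M)(B_\alpha)^+$ with the disjointness of the $B_\alpha$ gives $\sum_\alpha|x^*(y_\alpha)|\leq|s(x^*,M)|(\Omega)+|s(-x^*,M)|(\Omega)<\infty$.

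A standard Banach--Steinhaus application, viewing the finite signed sums $\sum_{\alpha\in F}\epsilon_\alpha y_\alpha$ as functionals on $Z$ that are pointwise bounded by the previous estimate, and then invoking the norming constant of~$Z$, upgrades this to a uniform norm bound $\sup_{F,\epsilon}\|\sum_{\alpha\in F}\epsilon_\alpha y_\alpha\|_X\leq C$ over all finite $F\sub\omega_1$ and signs $\epsilon_\alpha\in\{\pm1\}$; a convex-combination argument then yields $\|\sum_\alpha c_\alpha y_\alpha\|\leq C\max_\alpha|c_\alpha|$ for every finitely supported scalar family. Using the norming property of~$Z$ once more, pick $x_\alpha^*\in Z$ with $x_\alpha^*(y_\alpha)=1$ and $\sup_\alpha\|x_\alpha^*\|<\infty$; the long WUC property applied to each $x_\alpha^*$ ensures that $T_\alpha:=\{\beta:x_\alpha^*(y_\beta)\neq 0\}$ is countable.

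It remains to extract an uncountable $S\sub\omega_1$ along which $(y_\alpha,x_\alpha^*)_{\alpha\in S}$ is biorthogonal, i.e., $x_\alpha^*(y_\beta)=0$ for distinct $\alpha,\beta\in S$. Once achieved, biorthogonality together with $\sup_\alpha\|x_\alpha^*\|<\infty$ yields the lower estimate $\max_\alpha|c_\alpha|\leq(\sup_\alpha\|x_\alpha^*\|)\,\|\sum_\alpha c_\alpha y_\alpha\|$, and combined with the earlier upper estimate the assignment $e_\alpha\mapsto y_\alpha$ extends to an isomorphism from $c_0(S)\cong c_0(\omega_1)$ onto a subspace of~$X$, contradicting the hypothesis. \textbf{This extraction is the main obstacle.} The countability of the $T_{\alpha_\delta}$ permits, via transfinite recursion, choosing $\alpha_\gamma\notin\bigcup_{\delta<\gamma}T_{\alpha_\delta}$, which gives the one-sided vanishing $x_{\alpha_\delta}^*(y_{\alpha_\gamma})=0$ for all $\delta<\gamma$; the reverse vanishing $x_{\alpha_\gamma}^*(y_{\alpha_\delta})=0$ for $\delta<\gamma$ is more delicate, and would require, for instance, a Gram--Schmidt-type correction of $x_{\alpha_\gamma}^*$ within~$Z$ whose uniform norm control calls for an additional pigeonhole step bounding $\sum_\beta|x_\alpha^*(y_\beta)|$ uniformly in~$\alpha$.
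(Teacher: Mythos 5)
Your reduction is sound and runs parallel to the paper's own argument: the appeal to Theorem~\ref{theo:CCC} (noting that a norming subspace is symmetric and separating), the passage to an uncountable disjoint family with non-null values, the pigeonhole normalization, and the absolute summability $\sum_\alpha|x^*(y_\alpha)|\leq|s(x^*,M)|(\Omega)+|s(-x^*,M)|(\Omega)$ are all correct and are exactly the content of the paper's Lemma~\ref{lem:Disjoint} (where the uniform boundedness is obtained via the Closed Graph Theorem applied to $x^*\mapsto(x^*(y_\alpha))_\alpha\in\ell_1$, rather than your Banach--Steinhaus argument; the two are interchangeable here since $Z$ is norm-closed by the paper's convention). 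The upper $c_0$-estimate you derive is likewise correct.

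The gap is the one you flag yourself: the extraction of an uncountable subfamily on which $(y_\alpha,x_\alpha^*)$ is (almost) biorthogonal. This is not a routine transfinite recursion, and the Gram--Schmidt correction you sketch does not work as stated: subtracting $\sum_{\delta<\gamma}x_{\alpha_\gamma}^*(y_{\alpha_\delta})\,x_{\alpha_\delta}^*$ destroys the vanishings already arranged at earlier stages and gives no uniform control of the corrected norms, so the recursion does not close. What is needed here is precisely Rosenthal's lemma on relatively disjoint families of measures, applied to the finitely additive measures $F\mapsto\sum_{\beta\in F}|x_\alpha^*(y_\beta)|$ on the index set: it produces an uncountable $S$ with $\sum_{\beta\in S,\,\beta\neq\alpha}|x_\alpha^*(y_\beta)|<\eta/2$ for all $\alpha\in S$, which suffices for the lower $c_0$-estimate. (The uniform bound $\sup_\alpha\sum_\beta|x_\alpha^*(y_\beta)|<\infty$ that you identify as an obstacle is actually already available from your Banach--Steinhaus step together with $\sup_\alpha\|x_\alpha^*\|<\infty$, so the real missing ingredient is the disjointification lemma itself, not the bound.) The paper sidesteps all of this by quoting Rosenthal's theorem directly: the operator $S:\ell_\infty(\Delta)\to Z^*$ with $\|S(e_A)\|\geq\epsilon$ restricts to an isomorphic embedding on $c_0(\Delta')$ for some $\Delta'$ of full cardinality. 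Either cite that result (as the paper does, \cite[Theorem~7.10]{fab-alt-JJ}) or prove Rosenthal's lemma; without one of these, the proof is incomplete at its decisive step.
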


Observe that the result considered in case~(ii) above is a consequence of the following corollary (see Remark~\ref{rem:Applications}).

\begin{cor}\label{cor:FullDual}
Suppose $X$ contains no subspace isomorphic to~$c_0(\omega_1)$. Then every multimeasure $M:\Sigma \to b(X)$ 
admits a control measure.
\end{cor}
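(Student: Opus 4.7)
The plan is to deduce the corollary directly from Theorem~\ref{theo:Main} by taking the norming subspace $Z$ to be all of~$X^*$. Two trivialities do the job. First, by Hahn--Banach, $X^*$ is norming for $X$ in the strongest possible sense: for every $x \in X$ there is $x^* \in B_{X^*}$ with $x^*(x) = \|x\|$. Second, the very definition of a multimeasure bakes in the required countable additivity: $s(x^*,M) \in {\rm ca}(\Sigma)$ for every $x^* \in X^*$. So the hypotheses of Theorem~\ref{theo:Main} are fulfilled with $Z := X^*$, and the theorem immediately produces a control measure $\mu \in {\rm ca}^+(\Sigma)$ for~$M$.

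There is no obstacle specific to the corollary: all the substance sits inside Theorem~\ref{theo:Main} (which in turn reduces, via Theorem~\ref{theo:CCC}, to verifying the CCC of $\Sigma \setminus \mathcal{N}(M)$, and then exploits the hypothesis on~$X$ to rule out uncountable disjoint families). The corollary is essentially a restatement of the theorem in the friendlier language of multimeasures; the extra flexibility of allowing a proper norming $Z \subsetneq X^*$ in Theorem~\ref{theo:Main} is what is then exploited in Remark~\ref{rem:Applications} to recover case~(ii) of the introduction. Thus, in writing up the proof, a single sentence invoking Theorem~\ref{theo:Main} with $Z = X^*$ suffices.
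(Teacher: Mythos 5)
Your proof is correct and coincides with the paper's (implicit) argument: Corollary~\ref{cor:FullDual} is exactly Theorem~\ref{theo:Main} applied with $Z=X^*$, which is norming by Hahn--Banach, and the countable additivity of $s(x^*,M)$ for all $x^*\in X^*$ is the definition of multimeasure. Nothing further is needed.
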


In \cite[Question~3.4]{dip-alt} it was asked whether, for an arbitrary Banach space~$X$, every multimeasure $M:\Sigma \to b(X)$ admits
a control measure. We provide a negative answer for $X=c_0(\omega_1)$ in Example~\ref{exa:Counterexample}, thus 
showing that the converse of Corollary~\ref{cor:FullDual} holds for some measurable spaces.

The proof of Theorem~\ref{theo:Main}, Example~\ref{exa:Counterexample} and some remarks 
on Banach spaces not containing isomorphic copies of~$c_0(\omega_1)$ are included
in Section~\ref{section:Main}.

\section{Proof of Theorem~\ref{theo:CCC}}\label{section:CCC}

To deal with Theorem~\ref{theo:CCC} we need some lemmas.

\begin{lem}\label{lem:1}
Let $\mathcal{N}_0 \sub \mathcal{N}_1 \sub \Sigma$ be sets such that
$\emptyset \in \mathcal{N}_0$ and $\mathcal{N}_1$ is closed under countable unions.
If $\Sigma \setminus \mathcal{N}_0$ has the CCC, then for every $A\in \Sigma \setminus \mathcal{N}_1$ there is $B\in \Sigma_A \setminus \mathcal{N}_1$
such that $\mathcal{N}_0\cap \Sigma_B=\mathcal{N}_1\cap \Sigma_B$.
\end{lem}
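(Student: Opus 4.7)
The plan is to argue by contradiction through a maximality argument, which is the natural way to exploit the CCC hypothesis together with the closure of $\mathcal{N}_1$ under countable unions.

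Fix $A\in \Sigma \setminus \mathcal{N}_1$ and consider the family
$$
\mathcal{F}:=(\mathcal{N}_1 \setminus \mathcal{N}_0)\cap \Sigma_A.
$$
By Zorn's lemma I would pick a maximal subfamily $\mathcal{D}\sub \mathcal{F}$ whose members are pairwise disjoint. Since $\mathcal{D}\sub \Sigma \setminus \mathcal{N}_0$, the CCC assumption forces $\mathcal{D}$ to be at most countable. Write $D:=\bigcup \mathcal{D}$ (taking $D:=\emptyset$ if $\mathcal{D}$ is empty, which is admissible since $\emptyset \in \mathcal{N}_0 \sub \mathcal{N}_1$). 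The countable-union closure of $\mathcal{N}_1$ yields $D\in \mathcal{N}_1$.

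Now set $B:=A\setminus D\in \Sigma_A$. If $B\in \mathcal{N}_1$ then $A=D\cup B\in \mathcal{N}_1$, contradicting the choice of $A$; so $B\in \Sigma_A\setminus \mathcal{N}_1$, which is the candidate witness. It remains to verify $\mathcal{N}_1\cap \Sigma_B \sub \mathcal{N}_0 \cap \Sigma_B$ (the reverse inclusion is free since $\mathcal{N}_0\sub \mathcal{N}_1$). Suppose some $C\in \Sigma_B$ lies in $\mathcal{N}_1 \setminus \mathcal{N}_0$. Then $C\in \mathcal{F}$, and $C$ is disjoint from every member of $\mathcal{D}$ because $C\sub B=A\setminus D$. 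Hence $\mathcal{D}\cup \{C\}$ is a strictly larger disjoint subfamily of~$\mathcal{F}$, contradicting the maximality of~$\mathcal{D}$.

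There is no real obstacle here: the only subtle point is making sure the maximal disjoint family from Zorn's lemma is actually pinned down by the CCC (so the countable union trick applies), and that the difference $A\setminus D$ stays outside $\mathcal{N}_1$, which is immediate from $\mathcal{N}_1$ being closed under (finite) unions. I would present the argument essentially in the three steps above: (1)~extract the maximal disjoint family in $\mathcal{F}$, (2)~use CCC plus $\sigma$-closure of $\mathcal{N}_1$ to show that its union $D$ lies in $\mathcal{N}_1$ and that $B:=A\setminus D\notin \mathcal{N}_1$, and (3)~read off $\mathcal{N}_0\cap \Sigma_B=\mathcal{N}_1\cap \Sigma_B$ from maximality.
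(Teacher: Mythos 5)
Your proof is correct and uses the same core mechanism as the paper: take a maximal pairwise disjoint family in $\Sigma_A\cap(\mathcal{N}_1\setminus\mathcal{N}_0)$, use the CCC to make it countable, put its union in $\mathcal{N}_1$ via $\sigma$-closure, and let $B$ be the complement in $A$. The only (cosmetic) difference is that you construct $B$ directly while the paper phrases the argument as a proof by contradiction.
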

\begin{proof}
By contradiction, suppose that there is $A\in \Sigma \setminus \mathcal{N}_1$ such that 
\begin{equation}\label{eqn:contrad}
	\Sigma_B \cap (\mathcal{N}_1 \setminus \mathcal{N}_0) \neq \emptyset
	\quad
	\text{for every $B\in \Sigma_A \setminus \mathcal{N}_1$.}
\end{equation}
In particular, $\Sigma_A \cap (\mathcal{N}_1 \setminus \mathcal{N}_0) \neq \emptyset$. Zorn's lemma ensures
the existence of a maximal family $\mathcal{C}$ of pairwise disjoint elements of $\Sigma_A\cap (\mathcal{N}_1 \setminus \mathcal{N}_0)$.
Since $\Sigma \setminus \mathcal{N}_0$ has the CCC, the family $\mathcal{C}$ is countable, say $\mathcal{C}=\{C_1,C_2,\dots\}$ (maybe finite), 
and therefore $\bigcup_n C_n\in \Sigma_A \cap \mathcal{N}_1$.
Since $A\not\in \mathcal{N}_1$, we have $A\setminus \bigcup_n C_n \in \Sigma_A \setminus \mathcal{N}_1$.
By~\eqref{eqn:contrad} applied to $A\setminus \bigcup_n C_n$, there is $B' \in \Sigma_A \cap (\mathcal{N}_1\setminus \mathcal{N}_0)$ with
$B' \sub A\setminus \bigcup_n C_n$. This contradicts the maximality of~$\mathcal{C}$
(note that $\emptyset\not\in \Sigma_A \cap (\mathcal{N}_1 \setminus \mathcal{N}_0)$).
\end{proof}

\begin{lem}\label{lem:2}
Let $\{\mathcal{N}_i\}_{i\in I}$ be a family of non-empty subsets of~$\Sigma$ such that each $\mathcal{N}_i$ 
is closed under countable unions and satisfies $\Sigma_A \sub \mathcal{N}_i$ for every $A\in \mathcal{N}_i$.
If $\Sigma \setminus \bigcap_{i\in I}\mathcal{N}_i$ has the CCC, then there is a countable set
$I_0 \sub I$ such that 
$$
	\bigcap_{i\in I}\mathcal{N}_i=\bigcap_{i\in I_0}\mathcal{N}_i.
$$
\end{lem}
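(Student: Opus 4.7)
The plan is to combine Lemma~\ref{lem:1} with a Zorn-style exhaustion argument. Set $\mathcal{N}:=\bigcap_{i\in I}\mathcal{N}_i$. The hypotheses on each $\mathcal{N}_i$ pass to the intersection: $\mathcal{N}$ is closed under countable unions, hereditary (i.e., $\Sigma_A\sub\mathcal{N}$ whenever $A\in\mathcal{N}$), and contains $\emptyset$ (pick any $A\in\mathcal{N}_i$ and note $\emptyset\sub A$). Combined with the assumption that $\Sigma\setminus\mathcal{N}$ has the CCC, this lets me apply Lemma~\ref{lem:1} with $\mathcal{N}_0=\mathcal{N}$ and $\mathcal{N}_1=\mathcal{N}_i$ for every $i\in I$.

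Call a pair $(A,i)\in\Sigma\times I$ \emph{good} if $A\notin\mathcal{N}_i$ and $\mathcal{N}\cap\Sigma_A=\mathcal{N}_i\cap\Sigma_A$. Lemma~\ref{lem:1} guarantees that every $C\notin\mathcal{N}_i$ contains an $A$ for which $(A,i)$ is good. Let $\mathcal{P}$ denote the collection of families of good pairs whose $A$-components are pairwise disjoint, ordered by inclusion; every chain has the union as upper bound, so Zorn's lemma furnishes a maximal $\mathcal{E}\in\mathcal{P}$. Goodness forces each $A$-component to lie in $\Sigma\setminus\mathcal{N}$ (since $\mathcal{N}\sub\mathcal{N}_i$ and $A\notin\mathcal{N}_i$), so the $A$-components form a pairwise disjoint subfamily of $\Sigma\setminus\mathcal{N}$, which CCC forces to be countable. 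Write $\mathcal{E}=\{(A_n,i_n)\}_n$ and set $I_0:=\{i_n\}_n\sub I$.

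The inclusion $\bigcap_{i\in I_0}\mathcal{N}_i\supseteq\mathcal{N}$ is trivial. For the converse, fix $B\in\bigcap_{i\in I_0}\mathcal{N}_i$ and decompose $B=\bigl(B\setminus\bigcup_n A_n\bigr)\cup\bigcup_n(B\cap A_n)$. For each $n$, hereditariness of $\mathcal{N}_{i_n}$ yields $B\cap A_n\in\mathcal{N}_{i_n}\cap\Sigma_{A_n}$, and goodness of $(A_n,i_n)$ upgrades this to $B\cap A_n\in\mathcal{N}$; hence $\bigcup_n(B\cap A_n)\in\mathcal{N}$. If $B\setminus\bigcup_n A_n$ were not in $\mathcal{N}$, one could pick $j\in I$ with $B\setminus\bigcup_n A_n\notin\mathcal{N}_j$ and then apply Lemma~\ref{lem:1} to obtain $A'\sub B\setminus\bigcup_n A_n$ with $(A',j)$ good; adjoining $(A',j)$ to $\mathcal{E}$ (valid, since $A'$ is disjoint from every $A_n$) would contradict maximality. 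Therefore $B\setminus\bigcup_n A_n\in\mathcal{N}$, and $B\in\mathcal{N}$.

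The delicate point is the calibration of the poset $\mathcal{P}$: Lemma~\ref{lem:1} must play two roles. First, it ensures that goodness is a plentiful condition, so the maximal family $\mathcal{E}$ is rich enough to be meaningful. Second, and more crucially, the built-in refinement $\mathcal{N}\cap\Sigma_{A_n}=\mathcal{N}_{i_n}\cap\Sigma_{A_n}$ is precisely what allows the passage from $B\cap A_n\in\mathcal{N}_{i_n}$ to $B\cap A_n\in\mathcal{N}$ in the verification step. A naive maximal pairwise disjoint family in $\Sigma\setminus\mathcal{N}$, without this refinement baked into the definition of $\mathcal{P}$, would not yield the desired containment.
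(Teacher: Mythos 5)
Your proof is correct and follows essentially the same route as the paper's: the "good pairs" are exactly the elements of the paper's family $\mathcal{B}$ bundled with a choice of witness $i$, and both arguments take a maximal disjoint such family, invoke the CCC to make it countable, and use Lemma~\ref{lem:1} twice (once for non-triviality/maximality, once for the refinement $\mathcal{N}\cap\Sigma_{A_n}=\mathcal{N}_{i_n}\cap\Sigma_{A_n}$) in the final decomposition.
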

\begin{proof}
Write $\mathcal{N}:=\bigcap_{i\in I}\mathcal{N}_i$. Note that $\emptyset \in \mathcal{N}$.
The statement is obvious if $\mathcal{N}=\Sigma$, so we assume that $\mathcal{N}\neq\Sigma$. 
Define 
$$
	\mathcal{B}:=\{B\in \Sigma \setminus \mathcal{N}: \ \mathcal{N}\cap \Sigma_B=\mathcal{N}_i\cap \Sigma_B
	\text{ for some $i\in I$}\}. 
$$
Observe that $\mathcal{B}\neq \emptyset$. Indeed, take $i\in I$ for which $\mathcal{N}_i\neq \Sigma$ 
and pick any $A\in \Sigma \setminus \mathcal{N}_i$. By Lemma~\ref{lem:1}, there is 
$B\in \Sigma_A \setminus \mathcal{N}_i$
such that $\mathcal{N}\cap \Sigma_B=\mathcal{N}_i\cap \Sigma_B$, so $B\in \mathcal{B}$.

By Zorn's lemma, there is a maximal family $\mathcal{B}_0$ of pairwise disjoint elements of~$\mathcal{B}$. From the fact that 
$\Sigma\setminus \mathcal N$ has the CCC it follows that $\mathcal{B}_0$ is countable, say $\mathcal{B}_0=\{B_1,B_2,\dots\}$. 
For each~$n$ we choose $i(n)\in I$ such that 
\begin{equation}\label{eqn:piece}
	\mathcal{N}\cap \Sigma_{B_n}=\mathcal{N}_{i(n)}\cap \Sigma_{B_n}.
\end{equation}
To finish the proof, we will show that
$$
	\mathcal{N}=\bigcap_{n}\mathcal{N}_{i(n)}.
$$
Fix $A\in \bigcap_{n}\mathcal{N}_{i(n)}$. Observe that
$A \cap \bigcup_n B_n \in \mathcal{N}$. Indeed, 
for each~$n$, we have $A\in \mathcal{N}_{i(n)}$ and so $A \cap B_n\in \mathcal{N}_{i(n)}$,
hence~\eqref{eqn:piece} guarantees that $A\cap B_n \in \mathcal{N}$. Since $\mathcal{N}$ is closed under countable unions,
we conclude that $A \cap \bigcup_n B_n \in \mathcal{N}$, as claimed. So, in order to show 
that $A \in \mathcal{N}$ it only remains to check that $A \setminus \bigcup_n B_n \in \mathcal{N}$. By contradiction, suppose
that $A \setminus \bigcup_n B_n \not \in \mathcal{N}_i$ for some $i\in I$. Now, we can apply Lemma~\ref{lem:1}
to get $B\in \Sigma \setminus \mathcal{N}_i$ such that $B \sub A \setminus \bigcup_n B_n$ 
and $\mathcal{N}\cap \Sigma_B=\mathcal{N}_i\cap \Sigma_B$. Therefore, $B$ is disjoint from 
each element of~$\mathcal{B}_0$ and belongs to~$\mathcal{B}$. This contradicts
the maximality of~$\mathcal{B}_0$ (note that $\emptyset \not\in \mathcal{B}$). 
\end{proof}

\begin{lem}\label{lem:3}
Let $S \sub {\rm ca}^+(\Sigma)$ be a non-empty set. If $\Sigma \setminus \bigcap_{\mu \in S}\mathcal{N}(\mu)$ has the CCC, then
there is $\mu_0 \in {\rm ca}^+(\Sigma)$ such that 
$$
	\bigcap_{\mu \in S}\mathcal{N}(\mu)=\mathcal{N}(\mu_0).
$$
\end{lem}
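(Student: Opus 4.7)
The plan is to reduce to a countable subfamily via Lemma~\ref{lem:2} and then collapse that countable subfamily to a single finite measure by the usual weighted-sum trick.

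First I would check that the family $\{\mathcal{N}(\mu)\}_{\mu\in S}$ satisfies the hypotheses of Lemma~\ref{lem:2}. Each $\mathcal{N}(\mu)$ is non-empty (it contains~$\emptyset$), it is closed under countable unions (by countable additivity and non-negativity of~$\mu$), and it is hereditary in the sense that $\Sigma_A\sub\mathcal{N}(\mu)$ whenever $A\in\mathcal{N}(\mu)$ (by monotonicity of~$\mu$). The CCC hypothesis on $\Sigma\setminus\bigcap_{\mu\in S}\mathcal{N}(\mu)$ is exactly what Lemma~\ref{lem:2} requires.

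Applying Lemma~\ref{lem:2}, I get a countable set $S_0\sub S$, which I list as $S_0=\{\mu_n:n\in\Nat\}$ (possibly with repetitions if $S_0$ is finite), such that
$$
\bigcap_{\mu\in S}\mathcal{N}(\mu)=\bigcap_{n}\mathcal{N}(\mu_n).
$$
Then I would define
$$
\mu_0(A):=\sum_{n}\frac{1}{2^n(1+\mu_n(\Omega))}\,\mu_n(A)\quad\text{for all }A\in\Sigma.
$$
This series defines a finite non-negative countably additive measure on~$\Sigma$, so $\mu_0\in{\rm ca}^+(\Sigma)$. Since all summands are non-negative, $\mu_0(A)=0$ if and only if $\mu_n(A)=0$ for every~$n$, i.e., $\mathcal{N}(\mu_0)=\bigcap_n\mathcal{N}(\mu_n)$, which combined with the previous identity yields the desired equality.

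There is no real obstacle here: the core content is Lemma~\ref{lem:2} (which does the CCC work of cutting the family down to countably many indices), and the final step is the standard observation that a countable family of finite measures can be dominated by a single finite measure with the same null ideal via a summable weighted average. The only thing to be a little careful with is handling the degenerate case in which some $\mu_n(\Omega)=0$ (then that $\mathcal{N}(\mu_n)=\Sigma$ contributes trivially and can be discarded) and the possibility $S_0$ is finite (pad with repetitions, or just sum finitely many terms).
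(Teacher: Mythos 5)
Your proposal is correct and follows essentially the same route as the paper: apply Lemma~\ref{lem:2} to extract a countable subfamily $S_0$ and then take a convergent weighted sum $\sum_n 2^{-n}\mu_n$ (the paper normalizes $\mu(\Omega)=1$ up front rather than dividing by $1+\mu_n(\Omega)$, but this is the same device). Your explicit verification of the hypotheses of Lemma~\ref{lem:2} is a detail the paper leaves implicit.
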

\begin{proof} We can suppose without loss of generality
that $\mu(\Omega)=1$ for every $\mu\in S$. By Lemma~\ref{lem:2}, there is a countable set $S_0\sub S$
such that $\bigcap_{\mu \in S}\mathcal{N}(\mu)=\bigcap_{\mu \in S_0}\mathcal{N}(\mu)$.
Enumerate $S_0=\{\mu_1,\mu_2,\dots\}$ and define $\mu_0\in {\rm ca}^+(\Sigma)$
by $\mu_0(A):=\sum_n 2^{-n}\mu_n(A)$ for all $A\in \Sigma$. Then $\bigcap_{\mu \in S}\mathcal{N}(\mu)=\mathcal{N}(\mu_0)$.
\end{proof}

Theorem~\ref{theo:CCC} can now be obtained from Lemma~\ref{lem:3}, as follows:

\begin{proof}[Proof of Theorem~\ref{theo:CCC}]
Write $\mu_{x^*}:=|s(x^*,M)|\in {\rm ca}^+(\Sigma)$ for every $x^*\in \Gamma$.

We claim that 
\begin{equation}\label{eqn:NM}
	\mathcal{N}(M)=\bigcap_{x^*\in \Gamma}\mathcal{N}(\mu_{x^*}). 
\end{equation}
Indeed, if $A \in \mathcal{N}(M)$, then
for each $x^*\in \Gamma$ we have $s(x^*,M(B))=0$ for every $B \in \Sigma_A$, hence $A\in \mathcal{N}(\mu_{x^*})$.  
Conversely, take $A\in \Sigma \setminus \mathcal{N}(M)$. Then there is $B \in \Sigma_A$ such that $M(B)\neq \{0\}$.
Fix $x\in M(B) \setminus \{0\}$ and choose $x^*\in \Gamma$ such that $x^*(x)>0$. Then $s(x^*,M(B))\geq x^*(x)>0$
and therefore $\mu_{x^*}(B)>0$. Hence $A\not\in \mathcal{N}(\mu_{x^*})$. This proves~\eqref{eqn:NM}.

Now, if $\Sigma\setminus \mathcal{N}(M)$ has the CCC, then equality~\eqref{eqn:NM} and Lemma~\ref{lem:3} ensure the existence of
$\mu \in {\rm ca}^+(\Sigma)$ such that $\mathcal{N}(M)=\mathcal{N}(\mu)$, so $\mu$ is a control measure of~$M$. 
\end{proof}

\section{Main results}\label{section:Main}

The proof of Theorem~\ref{theo:Main} uses Lemma~\ref{lem:Disjoint} below. 
Given a subspace $Z \sub X^*$, we denote by $j_Z: X \to Z^*$ the bounded linear operator defined by 
$$
	j_Z(x)(x^*):=x^*(x) \quad\text{for all $x\in X$ and $x^*\in Z$.}
$$
As usual, given a non-empty set $I$, for each $i\in I$ we denote by $e_i$ the element of~$c_0(I)$ defined by $e_i(j)=0$ for all $j\in I\setminus \{i\}$ and $e_i(i)=1$.

\begin{lem}\label{lem:Disjoint}
Let $Z \sub X^*$ be a subspace and $M:\Sigma \to b(X)$ be a map such that $s(x^*,M)\in {\rm ca}(\Sigma)$ for every $x^*\in Z$. 
Let $\Delta$ be a non-empty set of pairwise disjoint elements of~$\Sigma$ and let
$x_A\in M(A)$ for every $A\in \Delta$. Then there is a bounded linear operator $S: \ell_\infty(\Delta) \to Z^*$
such that $S(e_A)=j_Z(x_A)$ for every $A\in \Delta$.
\end{lem}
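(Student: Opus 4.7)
My plan is to define the operator $S$ by the pointwise formula
$$
S(\alpha)(x^*) := \sum_{A \in \Delta} \alpha_A\, x^*(x_A) \qquad (\alpha\in\ell_\infty(\Delta),\ x^*\in Z),
$$
and then check that the series makes sense, that $S(\alpha)$ lies in~$Z^*$, that $S$ is linear in~$\alpha$, and that it is bounded. The identity $S(e_A)=j_Z(x_A)$ will be immediate from the definition.

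The first step is to establish absolute convergence of the defining series. Since $x_A\in M(A)$, one has $x^*(x_A)\leq s(x^*,M)(A)$ and $-x^*(x_A)\leq s(-x^*,M)(A)$, whence $|x^*(x_A)|\leq|s(x^*,M)(A)|+|s(-x^*,M)(A)|$. Because $\Delta$ is a family of pairwise disjoint elements of~$\Sigma$ and both $s(\pm x^*,M)$ lie in~${\rm ca}(\Sigma)$ (so are of finite variation), a standard argument gives
$$
\sum_{A\in\Delta}|x^*(x_A)| \leq |s(x^*,M)|(\Omega)+|s(-x^*,M)|(\Omega)<\infty,
$$
so at most countably many $A$ contribute and the series defining $S(\alpha)(x^*)$ converges absolutely. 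Linearity of $S(\alpha)$ on~$Z$, linearity of $S$ in~$\alpha$, and the formula $S(e_A)=j_Z(x_A)$ all follow routinely.

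The main obstacle is the boundedness of~$S$, which reduces to a uniform estimate of the form $|s(x^*,M)|(\Omega)\leq K\|x^*\|$ on~$Z$. To obtain this I would invoke the Nikodym Boundedness Theorem applied to the family $\{s(x^*,M):x^*\in Z,\ \|x^*\|\leq 1\}\sub{\rm ca}(\Sigma)$. This family is setwise bounded, because for each fixed $A\in\Sigma$ one has $|s(x^*,M)(A)|\leq \sup_{y\in M(A)}\|y\|$, which is finite since $M(A)\in b(X)$, uniformly in $\|x^*\|\leq 1$. Nikodym's theorem then yields a finite constant $K$ with $|s(x^*,M)|(\Omega)\leq K$ whenever $\|x^*\|\leq 1$; by positive homogeneity of $x^*\mapsto s(x^*,M)$ this upgrades to $|s(x^*,M)|(\Omega)\leq K\|x^*\|$ on all of~$Z$, and the analogous bound holds for $|s(-x^*,M)|(\Omega)$. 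Combining this with the majorization of the previous step gives $\|S(\alpha)\|_{Z^*}\leq 2K\|\alpha\|_\infty$, which delivers the required bounded operator.
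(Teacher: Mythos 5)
Your proof is correct, and its first half --- the verification that $(x^*(x_A))_{A\in\Delta}\in\ell_1(\Delta)$ with the bound $\sum_{A\in\Delta}|x^*(x_A)|\leq|s(x^*,M)|(\Omega)+|s(-x^*,M)|(\Omega)$, obtained by splitting a finite subfamily according to the sign of $x^*(x_A)$ and using disjointness --- is exactly the paper's first step. Where you genuinely diverge is in how boundedness is secured. The paper never defines $S$ pointwise: it introduces the linear map $T:Z\to\ell_1(\Delta)$, $T(x^*)=(x^*(x_A))_{A\in\Delta}$, gets its continuity for free from the Closed Graph Theorem (coordinatewise convergence identifies the limit, and $Z$ is norm-closed, hence complete), and then takes $S:=T^*:\ell_\infty(\Delta)\to Z^*$, reading off $S(e_A)=j_Z(x_A)$ from duality. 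You instead prove the quantitative uniform estimate $|s(x^*,M)|(\Omega)\leq K$ on the unit ball of $Z$ via the Nikodym Boundedness Theorem; this is legitimate, since the family $\{s(x^*,M):x^*\in Z,\ \|x^*\|\leq 1\}$ is indeed setwise bounded because each $M(A)$ is a bounded set, and passing from $\sup_{A}|\nu(A)|$ to $|\nu|(\Omega)$ costs only a universal constant for real-valued measures. Both are automatic-boundedness arguments, but the tools differ: the Closed Graph route is the more elementary one and needs only completeness of $Z$, whereas your route invokes a deeper theorem and in exchange does not actually require $Z$ to be closed and yields the explicit bound $\|S\|\leq 2K$. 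Either way the lemma is established.
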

\begin{proof} We claim that for every $x^*\in Z$ we have $(x^*(x_A))_{A\in \Delta}\in \ell_1(\Delta)$, with 
\begin{equation}\label{eqn:norm-l1}
	\sum_{A \in \Delta} |x^*(x_A)| \leq |s(x^*,M)|(\Omega)+|s(-x^*,M)|(\Omega).
\end{equation}
Indeed, take any finite set $\Delta' \sub \Delta$ and
write $\Delta'_{+}:=\{A\in \Delta': x^*(x_A)\geq 0\}$ and $\Delta'_{-}:=\Delta' \setminus \Delta'_{+}$. Then
\begin{eqnarray*}
	\sum_{A \in \Delta'} |x^*(x_A)| 
	 & = & \sum_{A \in \Delta'_{+}} x^*(x_A) + \sum_{A \in \Delta'_{-}} (-x^*)(x_A) 
	\\ &\leq& \sum_{A\in \Delta'_{+}} s(x^*,M(A)) + \sum_{A\in \Delta'_{-}} s(-x^*,M(A))
	\\ &\leq& |s(x^*,M)|(\Omega)+|s(-x^*,M)|(\Omega).
\end{eqnarray*}
As $\Delta'$ is an arbitrary finite subset of~$\Delta$, we conclude that $(x^*(x_A))_{A\in \Delta}\in \ell_1(\Delta)$
and that inequality~\eqref{eqn:norm-l1} holds.

Now, we define a linear map $T:Z \to \ell_1(\Delta)$ by 
$$
	T(x^*):=(x^*(x_A))_{A\in \Delta} \quad \text{for all $x^*\in Z$.} 
$$
An appeal to the Closed Graph Theorem ensures that $T$ is continuous. Clearly,
its adjoint $S:=T^*: \ell_\infty(\Delta) \to Z^*$ satisfies the required property.
\end{proof}

Recall that a subspace $Z \sub X^*$ is called {\em norming} if the formula 
$$
	|||x|||=\sup\{|x^*(x)|:\, x^*\in Z \cap B_{X^*}\} \quad \text{for all $x\in X$}
$$
defines an equivalent norm on~$X$, that is, $j_Z$ is an isomorphic embedding. 

\begin{proof}[Proof of Theorem~\ref{theo:Main}]
Since $Z$ is a norming subspace of~$X^*$, it is symmetric and separates the points of~$X$. So, by Theorem~\ref{theo:CCC}, $M$ admits a control measure if (and only if) 
$\Sigma \setminus \mathcal{N}(M)$ has the CCC. We will prove that if $\Sigma \setminus \mathcal{N}(M)$ fails the CCC, 
then $X$ contains a subspace isomorphic to $c_0(\omega_1)$, thus contradicting the assumption.
 
Let $\Delta_0$ be an uncountable set of pairwise disjoint elements of~$\Sigma \setminus \mathcal{N}(M)$. 
We can (and do) assume without loss of generality that $M(A) \neq \{0\}$ for every $A\in \Delta_0$.  Indeed,
for each $A\in \Delta_0$ there is $B_A\in \Sigma_A$ such that $M(B_A)\neq \{0\}$. Each $B_A$ is non-empty 
(by equality \eqref{eqn:NM} in the proof of Theorem~\ref{theo:CCC})
and $B_A \cap B_{A'} =\emptyset$ whenever $A\neq A'$. Thus, we might replace $\Delta_0$ with $\{B_A:A\in \Delta_0\}$ if necessary.

Fix $x_A\in M(A)\setminus \{0\}$ for every $A\in \Delta_0$.
Since $j_Z$ is an isomorphic embedding, it is injective and so $j_Z(x_A)\neq 0$ for every $A\in \Delta_0$.
Since $\Delta_0$ is uncountable, there exist an uncountable set $\Delta \sub \Delta_0$
and $\epsilon>0$ such that $\|j_Z(x_A)\|\geq \epsilon$ for all $A\in \Delta$. 

By Lemma~\ref{lem:Disjoint}, there is a bounded linear operator $S:\ell_\infty(\Delta)\to Z^*$ such that
$S(e_A)=j_Z(x_A)$ for all $A\in \Delta$. In particular, we get $S(c_0(\Delta)) \sub j_Z(X)$
(bear in mind that $j_Z$ has closed range). 
Since $\|S(e_A)\|\geq \epsilon$ for every $A\in \Delta$, a result of Rosenthal \cite{ros-70} (see, e.g., \cite[Theorem~7.10]{fab-alt-JJ}) 
ensures that there is a set $\Delta' \sub \Delta$ with ${\rm card}(\Delta')={\rm card}(\Delta)$ such that $S$ is an 
isomorphic embedding when restricted to $\ell_\infty(\Delta')$ (as a subspace of~$\ell_\infty(\Delta)$).
Therefore, $j_Z(X)$ contains a subspace isomorphic to~$c_0(\Delta')$, and the same holds for~$X$.
Since $\Delta'$ is uncountable, we conclude that $X$ contains a subspace isomorphic to~$c_0(\omega_1)$.
\end{proof}

Theorem~\ref{theo:Main} applies in a natural way to set-valued maps in dual Banach spaces to get the following corollary.  
A map $M:\Sigma \to b(X^*)$ is called a {\em weak$^*$-multimeasure} if $s(x,M) \in {\rm ca}(\Sigma)$ for every $x\in X$. 

\begin{cor}\label{cor:Dual}
Suppose $X^*$ contains no subspace isomorphic to~$c_0(\omega_1)$. 
Then every weak$^*$-multimeasure $M:\Sigma \to b(X^*)$ admits a control measure.
\end{cor}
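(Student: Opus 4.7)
The plan is to apply Theorem~\ref{theo:Main} directly, taking the dual $X^*$ as the ambient Banach space (in the role played by $X$ in the theorem) and taking the canonical image of $X$ in~$X^{**}$ as the norming subspace (in the role played by~$Z$).

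The first step is to verify that the canonical embedding $X \to X^{**}$ identifies $X$ with a norming subspace of $(X^*)^* = X^{**}$. This is classical: by the Hahn-Banach theorem, for each $x^*\in X^*$,
$$
\|x^*\|=\sup\{|x^*(x)|:\, x\in B_X\},
$$
so $X$ is in fact $1$-norming for~$X^*$.

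The second step is to match the remaining hypotheses. Under this identification, the weak$^*$-multimeasure condition---that $s(x,M)\in {\rm ca}(\Sigma)$ for every $x\in X$---reads as $s(z,M)\in {\rm ca}(\Sigma)$ for every $z$ in the norming subspace $Z=X\sub X^{**}$. The hypothesis that $X^*$ contains no subspace isomorphic to~$c_0(\omega_1)$ is precisely the hypothesis of Theorem~\ref{theo:Main} on the ambient space. Consequently Theorem~\ref{theo:Main}, applied to $M:\Sigma\to b(X^*)$ with ambient space~$X^*$ and norming subspace~$X\sub X^{**}$, yields a control measure for~$M$.

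There is essentially no obstacle here: the corollary is a direct translation of Theorem~\ref{theo:Main} into the dual-space setting, since the canonical embedding of a Banach space into its bidual is always norming. The definition of weak$^*$-multimeasure was tailored precisely to make this reformulation go through.
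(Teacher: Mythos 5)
Your proof is correct and is exactly the argument the paper intends: the corollary is stated as a direct application of Theorem~\ref{theo:Main} to the ambient space $X^*$ with the canonical (closed, $1$-norming) image of $X$ in $X^{**}$ as the norming subspace~$Z$. The identification $s(j(x),M)=s(x,M)$ makes the weak$^*$-multimeasure hypothesis match the hypothesis of the theorem, so nothing further is needed.
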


We stress that if $X^*$ contains no subspace isomorphic to $c_0$, then every weak$^*$-multimeasure whose 
values are convex $w^*$-compact subsets of~$X^*$ is a multimeasure, see \cite[Theorem~3.4]{mus13}.

Let us present an example of a multimeasure not admitting a control measure. Note that, for instance,
the construction can be carried out for the Borel $\sigma$-algebra of any uncountable Hausdorff topological space.

\begin{exa}\label{exa:Counterexample}
Suppose there is an uncountable set $\Omega_0 \sub \Omega$ such that $\{t\}\in \Sigma$ for every~$t\in \Omega_0$. 
Fix an injective map $i:\omega_1\to \Omega_0$ and define $M:\Sigma \to b(c_0(\omega_1))$ by 
$$
	M(A):=\left\{\sum_{\alpha \in F} e_\alpha-\sum_{\alpha \in G}e_\alpha: \ \text{$F,G \sub i^{-1}(A)$ are finite and disjoint}\right\}
$$
for every $A\in \Sigma$. Then $M$ is a multimeasure not admitting a control measure. 
\end{exa}
\begin{proof} For each $\alpha <\omega_1$ we have 
$e_\alpha\in M(\{i(\alpha)\})$, hence $\{i(\alpha)\}\in \Sigma \setminus \mathcal{N}(M)$. 
Since $i$ is injective, we have $i(\alpha) \neq i(\alpha')$ whenever $\alpha\neq \alpha'$.
Therefore, $\Sigma\setminus \mathcal{N}(M)$ fails the CCC and so $M$ does not admit a control measure. 

To check that $M$ is a multimeasure, take 
$\varphi=(\varphi(\alpha))_{\alpha<\omega_1} \in \ell_1(\omega_1) = c_0(\omega_1)^*$. For each $A\in \Sigma$ we have
\begin{eqnarray*}
	s(\varphi,M(A)) &=&\sup\left\{ \sum_{\alpha\in F}\varphi(\alpha)-\sum_{\alpha \in G}\varphi(\alpha): \ 
	\text{$F,G \sub i^{-1}(A)$ finite, $F\cap G=\emptyset$}\right\}\\
	 &=& \sum_{\alpha\in i^{-1}(A)} |\varphi(\alpha)|.
\end{eqnarray*}
Therefore, if $(A_n)_{n\in \N}$ is a sequence of pairwise disjoint elements of~$\Sigma$, then
\begin{eqnarray*}
	s\left(\varphi,M\left(\bigcup_{n\in \N}A_n\right)\right)&=&
	\sum_{\alpha\in i^{-1}(\bigcup_{n\in \N}A_n)}|\varphi(\alpha)| \\ &=&
	\sum_{n\in \N}\sum_{\alpha\in i^{-1}(A_n)}|\varphi(\alpha)| = \sum_{n\in \N}s(\varphi,M(A_n)).
\end{eqnarray*}
It follows that $s(\varphi,M)\in {\rm ca}(\Sigma)$. 
\end{proof}

\begin{cor}\label{cor:equivalence}
Suppose $\{t\}\in \Sigma$ for uncountably many~$t\in \Omega$. The following statements are equivalent:
\begin{enumerate} 
\item[(i)] Every multimeasure $M: \Sigma \to b(X)$ admits a control measure.
\item[(ii)] $X$ contains no subspace isomorphic to~$c_0(\omega_1)$.
\end{enumerate}
\end{cor}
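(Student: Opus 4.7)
The implication (ii) $\Rightarrow$ (i) is already in hand: it is precisely Corollary~\ref{cor:FullDual}, which follows from Theorem~\ref{theo:Main} applied with the norming subspace $Z=X^*$. So the plan is to prove the converse, (i) $\Rightarrow$ (ii), by contrapositive. Assuming that $X$ contains a subspace isomorphic to $c_0(\omega_1)$, I will produce a multimeasure $M:\Sigma \to b(X)$ without a control measure.

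The natural way is to transport the multimeasure of Example~\ref{exa:Counterexample} from $c_0(\omega_1)$ into $X$. Concretely, fix an isomorphic embedding $T:c_0(\omega_1)\to X$ and let $M_0:\Sigma \to b(c_0(\omega_1))$ be the multimeasure constructed in Example~\ref{exa:Counterexample} using the uncountable set of singletons in~$\Sigma$ guaranteed by the hypothesis. Define $M:\Sigma\to b(X)$ by $M(A):=T(M_0(A))$. The values are bounded and non-empty because $T$ is bounded and $M_0$ has bounded non-empty values.

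To see that $M$ is a multimeasure, I would fix $x^*\in X^*$ and use that, for every $A\in \Sigma$,
$$
s(x^*,M(A)) \;=\; \sup\{x^*(T(y)):y\in M_0(A)\} \;=\; s(T^*(x^*),M_0(A)),
$$
so $s(x^*,M)$ coincides with $s(T^*(x^*),M_0)$, which lies in ${\rm ca}(\Sigma)$ because $M_0$ is a multimeasure and $T^*(x^*)\in c_0(\omega_1)^*$. To see that $M$ admits no control measure, suppose $\mu\in {\rm ca}^+(\Sigma)$ were one. Then for every $A\in \mathcal{N}(\mu)$ we would have $T(M_0(A))=M(A)=\{0\}$, and injectivity of~$T$ (it is an isomorphic embedding) would force $M_0(A)=\{0\}$. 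Thus $\mu$ would be a control measure for~$M_0$, contradicting Example~\ref{exa:Counterexample}.

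There is no real obstacle here; the only point that needs a brief sanity check is the identity $s(x^*,M(A))=s(T^*(x^*),M_0(A))$, which is just the definition of the adjoint, and the use of injectivity of~$T$ to transfer the failure of having a control measure back from $M$ to~$M_0$. Together with the already established direction, this yields the equivalence.
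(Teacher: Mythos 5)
Your proof is correct and follows the route the paper intends: (ii)$\Rightarrow$(i) is Corollary~\ref{cor:FullDual}, and (i)$\Rightarrow$(ii) is obtained by transporting the multimeasure of Example~\ref{exa:Counterexample} into~$X$ via an isomorphic embedding $T:c_0(\omega_1)\to X$, using $s(x^*,T(M_0(A)))=s(T^*x^*,M_0(A))$ and the injectivity of~$T$. The paper leaves this verification implicit, but your argument fills it in exactly as expected.
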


We finish the paper with some remarks on Banach spaces not containing subspaces isomorphic to~$c_0(\omega_1)$.

\begin{rem}\label{rem:CK}
Let $K$ be a compact Hausdorff topological space. Then $C(K)$ contains no subspace isomorphic to~$c_0(\omega_1)$ if and only if 
$K$ has the CCC (i.e., every
family of pairwise disjoint open subsets of~$K$ is countable), according
to a result of Rosenthal~\cite{ros-J-4} (see, e.g., \cite[Theorem~7.22]{fab-alt-JJ}).
\end{rem}

In what follows, $B_{X^*}$ is equipped with the weak$^*$-topology.

\begin{rem}\label{rem:Applications}
In general, a Banach space~$X$ does not contain subspaces isomorphic to~$c_0(\omega_1)$ in each of the following cases: 
\begin{enumerate}
\item[(a)] if $B_{X^*}$ has the CCC;
\item[(b)] if $X^*$ is weak$^*$-separable.
\end{enumerate}
Case~(a) is a consequence of the result mentioned in Remark~\ref{rem:CK} and the canonical embedding of~$X$ 
into~$C(B_{X^*})$. For case~(b), just observe that the property of having weak$^*$-separable dual is inherited by subspaces
and that $c_0(\omega_1)^*=\ell_1(\omega_1)$ is not weak$^*$-separable.
\end{rem}

There are Banach spaces~$X$ such that $B_{X^*}$ has the CCC and $X^*$ is not weak$^*$-separable:

\begin{exa}\label{exa:l1Gamma}
Let $X=\ell_1(\Gamma)$ for a non-empty set~$\Gamma$. Then 
$B_{X^*}$ is homeomorphic to the Tychonoff cube $[0,1]^\Gamma$, which has the CCC (see, e.g., \cite[2.3.18]{eng-J}). 
However, $X^*=\ell_\infty(\Gamma)$ is not weak$^*$-separable whenever ${\rm card}(\Gamma)>\mathfrak{c}$ (the continuum). Indeed,
if a Banach space $Y$ has weak$^*$-separable dual, then there is an injective bounded linear operator from~$Y$ to~$\ell_\infty$, 
hence ${\rm card}(Y) \leq {\rm card}(\ell_\infty)=\mathfrak{c}$. 
\end{exa}

We do not know whether the weak$^*$-separability of~$X^*$ implies that $B_{X^*}$ has the CCC. The answer is affirmative for $C(K)$ spaces:

\begin{rem}\label{rem:CK2}
Let $K$ be a compact Hausdorff topological space. If $C(K)^*$ is weak$^*$-separable, then $B_{C(K)^*}$ has the CCC.
\end{rem}
\begin{proof}
Since $C(K)^*$ is weak$^*$-separable, there is an injective bounded linear operator $T: C(K) \to \ell_2$. Then
$T^*(B_{\ell_2})$ is a weakly compact subset of~$C(K)^*$ which separates the points of~$C(K)$. Another result of Rosenthal (see \cite[Theorem~4.5(b)]{ros-J-4}) ensures that $K$ carries a strictly positive measure (i.e., there is a regular Borel probability $\mu$ on~$K$
such that $\mu(U)>0$ whenever $U \sub K$ is open and non-empty). A folklore argument (cf. \cite{ple-thesis}) applies
to conclude that $B_{C(K)^*}$ carries a strictly positive measure as well, which clearly implies that
$B_{C(K)^*}$ has the CCC. 
\end{proof}

For a compact Hausdorff topological space~$K$, it is known that 
$B_{C(K)^*}$ has the CCC if and only if $K^n$ has the CCC for every $n\in \N$  (see \cite[Theorem~12]{whe3}).
Under MA$+\neg$CH, these conditions are equivalent to the fact that $K$ has the CCC. 
However, under~CH, there exist
compact spaces having the CCC whose square fails it. For more information on this see, e.g.,~\cite{tod-J-4}.

\subsection*{Acknowledgements}
The author wishes to thank A.~Avil\'{e}s and G.~Plebanek for helpful comments.


\bibliographystyle{amsplain}

\end{document}